\newtheorem{theorem}{Theorem}
\newtheorem{thm}{Theorem}[section]
\newtheorem{cor}[thm]{Corollary}
\newtheorem{prop}[thm]{Proposition}
\theoremstyle{definition}
\newtheorem{defn}[thm]{Definition}
\newtheorem{conv}[thm]{Convention}
\newtheorem{prop-defn}[thm]{Proposition-Definition}
\begin{document}

\title[Ascending chain condition in generic groups]
      {Ascending chain condition in generic groups}
\author[I.~Kapovich]{Ilya Kapovich}

\address{Department of Mathematics and Statistics, Hunter College of CUNY\newline
  \indent 695 Park Ave, New York, NY 10065}
 
  \email{\tt ik535@hunter.cuny.edu}

\thanks{The author was supported by the individual NSF
  grants DMS-1905641}

\makeatletter
\@namedef{subjclassname@2020}{
	\textup{2020} Mathematics Subject Classification}
\makeatother

\subjclass[2020]{Primary 20F69, Secondary 20E07, 20E15}

\date{\today}

\begin{abstract}
We prove that for any fixed integers $m\ge 2, t\ge 1, k\ge 2$ a generic $m$-generator $t$-relator group satisfies the Ascending Chain Condition for $k$-generated subgroups.
\end{abstract}
\maketitle

\section{Introduction}

For an integer $k\ge 1$ we say that a group $G$ is \emph{$k$-generated} if there exist elements $g_1,\dots, g_k\in G$ such that $G=\langle g_1,\dots, g_k\rangle$. Thus $G$ being $k$-generated is equivalent to having $rank(G)\le k$, where for a group $G$ the \emph{rank} of $G$, denoted $rank(G)$, is defined as $rank(G)=\min\{|S| : S\subseteq G, \langle S\rangle=G\}$. 

For $k\ge 1$ we say that a group $G$ satisfies the \emph{ascending chain condition $ACC_k$ for $k$-generated subgroups} if every strictly ascending chain of $k$-generated subgroups of $G$ 
\[
H_1\lneq H_2\lneq \dots
\]
terminates in finitely many steps. For example, if every nontrivial element of $G$ is contained in a unique maximal cyclic subgroup, then $G$ satisfies $ACC_1$. For this reason, every torsion-free word-hyperbolic group satisfies $ACC_1$. A finite group $G$ obviously satisfies $ACC_k$ for every $k\ge 1$. It is also fairly easy to see that a finitely generated abelian group satisfies $ACC_k$ for all $k\ge 1$. However, in most situations establishing condition $ACC_k$ requires substantial work.

A classic result of Higman~\cite{H51} and Takahasi shows that a free group $F$ satisfies $ACC_k$ for every $k\ge 1$.  Kapovich and Myasnikov~\cite{KM02} later gave a proof of this result using Stallings foldings~\cite{Sta}. Shusterman~\cite{Shu} used pro-finite techniques to show that limit groups (including, in particular, closed orientable surface groups) satisfy $ACC_k$ for all $k\ge 1$. Note that since closed non-orientable surface groups are commensurable with closed orientable surface groups, it follows that $ACC_k$ holds for all closed surface groups.
Recently Bering and Lazarovich~\cite{BL21} adapted the Kapovich-Myasnikov argument to give another proof of the $ACC_k$ condition for surface groups; they also proved that closed 3-manifold groups satisfy the ascending chain condition for $k$-generated free groups, with an arbitrary fixed $k\ge 1$.

 In this paper we establish condition $ACC_k$, for any fixed $k\ge 2$, for generic finite group presentations. We refer the reader to \cite{Oll05,Oll06,AO96,A1,A2,KS05,KS08} for a more detailed discussion of various models of random and generic groups. We only briefly recall a few relevant definitions here. Let $m\ge 2$, $A=\{a_1,\dots,a_k\}$ and $F_m=F(A)$. For an integer $t\ge 1$ let $\mathcal C_{m,t}$ be the set of all group presentations $\langle a_1,\dots, a_m|r_1,\dots, r_t\rangle$ where $r_i\in F(A)$ are nontrivial cyclically reduced words. For a subset $\mathcal P\subseteq \mathcal C_{m,t}$ and $n\ge 1$ denote by $\mathcal N(\mathcal P,n,t)$ the number of presentations $\langle a_1,\dots, a_m|r_1,\dots, r_t\rangle$ in $\mathcal P$ with $|r_i|\le n$ for $i=1,\dots, t$.  Similarly, we denote by $\mathcal S(\mathcal P,n,t)$ the number of presentations $\langle a_1,\dots, a_m|r_1,\dots, r_t\rangle$ in $\mathcal P$ with $|r_i|=n$ for $i=1,\dots, t$. 
 
 We say that $\mathcal P$ is \emph{generic} in $\mathcal C_{m,t}$ if
 \[
 \lim_{n\to\infty}\frac{\mathcal N(\mathcal P,n)}{\mathcal N(\mathcal C_{m,t},n,t)}=1.
 \]
If, in addition, the convergence in the above limit is exponentially fast, we say that $\mathcal P$ is \emph{exponentially generic} in $\mathcal C_{m,t}$. Note that both $\mathcal N(\mathcal C_{m,1},n,1)$ and $\mathcal S(\mathcal C_{m,1},n,1)$  grow as $Const\, (2m-1)^n$, and both $\mathcal N(\mathcal C_{m,t},n,t)$ and $\mathcal S(\mathcal C_{m,t},n,t)$ grow as $Const\, (2m-1)^{tn}$. For this reason replacing $|r_i|\le n$ by $|r_i|=n$ leads to the notion of genericity with similar properties.

Our main result is:

\begin{theorem}\label{t:A}
Let $m\ge 2, t\ge 1, k\ge 1$ be integers. There exists an exponentially generic class $Q_{m,t,k}$ of group presentations with $m$ generators and $t$ defining relators such that every group $G=\langle a_1,\dots, a_m| r_1,\dots, r_t  \rangle$ from $Q_{m,t,k}$ satisfies the Ascending Chain Condition $ACC_k$ for $k$-generated subgroups.
\end{theorem}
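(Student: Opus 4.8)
The plan is to transplant the Stallings-foldings proof of $ACC_k$ for free groups due to Kapovich--Myasnikov \cite{KM02}, as adapted to surface and $3$-manifold groups by Bering--Lazarovich \cite{BL21}, into the small-cancellation world provided by genericity. For $k=1$ there is nothing to do: a generic presentation yields a torsion-free word-hyperbolic group, and such groups satisfy $ACC_1$ as recalled in the introduction, so I may assume $k\ge 2$. Fix a small constant $\lambda=\lambda(k)>0$, to be pinned down in Step~3, and let $Q_{m,t,k}$ consist of those presentations in $\mathcal C_{m,t}$ whose symmetrized relator set satisfies the metric small-cancellation condition $C'(\lambda)$, together with finitely many auxiliary "no short coincidence" conditions needed below. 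A routine counting argument (cf.\ \cite{AO96,Oll05}) shows that the proportion of relator tuples of length $\le n$ admitting two distinct cyclic subwords of length $\ge\lambda n$ that coincide decays exponentially in $n$; hence each imposed condition is exponentially generic, and a finite intersection of exponentially generic classes is again exponentially generic. Thus $Q_{m,t,k}$ is exponentially generic and every $G\in Q_{m,t,k}$ is word-hyperbolic and torsion-free.

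Next I would attach to each $k$-generated subgroup $H=\langle w_1,\dots,w_k\rangle\le G$ a canonical finite $A$-labeled graph refining the Stallings construction. Folding the wedge of loops labeled $w_1,\dots,w_k$ in $F_m$ produces the usual core graph $\Delta$ with $b_1(\Delta)\le k$. I then \emph{saturate} $\Delta$ with respect to the relators: whenever a reduced labeled path reads more than half of some cyclic conjugate of a relator $r_i^{\pm 1}$, I glue in the complementary arc of that relator and re-fold. By Greendlinger's Lemma these completion moves are exactly the ones forced by equality in $G$, so the resulting folded, Dehn-reduced graph $\widehat\Delta_H$ is a canonical model whose fundamental group surjects onto $H$ and whose language of labeled paths determines $H$.

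The crux, and the step I expect to be the main obstacle, is a uniform complexity bound: the combinatorial complexity of $\widehat\Delta_H$ — measured, say, by the number of valence-$\ge 3$ vertices after suppressing valence-$2$ vertices, together with the number of relator arcs glued in — must be bounded by a constant $C(m,t,k)$ depending only on $m,t,k$ and not on the words $w_i$. Here the small-cancellation hypothesis does the work: each glued relator arc overlaps the existing graph in a union of pieces, each of length $<\lambda n$, so a fixed relator can be attached in only boundedly many incompatible ways before two long pieces would be forced to coincide, violating $C'(\lambda)$; and the number of attachment sites is controlled by $b_1(\Delta)\le k$. Carrying this out rigorously requires a Greendlinger-type curvature count on the van Kampen diagrams produced by the saturation procedure, and choosing $\lambda$ small enough relative to $k$ to make that count close; this is the technical heart of the argument.

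Finally, given a strictly ascending chain $H_1\lneq H_2\lneq\cdots$ of $k$-generated subgroups, the inclusions induce label-preserving immersions $\widehat\Delta_{H_1}\to\widehat\Delta_{H_2}\to\cdots$ of the canonical graphs. By the bound of Step~3 all of these graphs have complexity $\le C(m,t,k)$, so after passing to a subsequence their suppressed topological types are constant and the immersions are eventually surjective on that type. A Higman--Takahasi argument on the remaining finite data — the integer lengths of the suppressed edges together with the relator-arc decorations — then forces the chain of graphs, and hence the chain of subgroups, to stabilize, contradicting strictness. This establishes $ACC_k$ for every $G\in Q_{m,t,k}$ and completes the proof.
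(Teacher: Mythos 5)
Your plan founders on the step you yourself flag as the crux, and the gap is not just technical. The uniform complexity bound of Step~3 --- a constant $C(m,t,k)$, independent of the generating words $w_1,\dots,w_k$, bounding the saturated graph $\widehat\Delta_H$ --- is asserted but never proved, and nothing in the paper (or in the literature you cite) supplies it. In fact the paper's proof deliberately avoids any such claim: the minimal graphs $\Gamma_i$ it works with have Betti number at most $k$ but \emph{unbounded} size, and finiteness enters only later, through a pigeonhole argument. Namely, one fixes a finite generating set of $H_1$ once and for all, shows (via Corollary~\ref{c:red}) that labels of reduced paths in every $\Gamma_i$ are $(C,0)$-quasigeodesics, so these finitely many fixed elements are readable in each $\Gamma_i$ by closed paths of uniformly bounded length, and then passes to a subsequence on which the bounded subgraphs they span are constant; iterating this produces nested constant subgraphs whose Betti numbers strictly increase, contradicting $b(\Gamma_i)\le k$ after at most $k+1$ steps (Theorem~\ref{t:ACC}). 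Your alternative route --- termination of the saturation procedure, canonicity of $\widehat\Delta_H$, and the bound on the number of glued relator arcs --- is essentially equivalent to proving that every $k$-generated infinite-index subgroup is quasiconvex with uniformly controlled core, which is a conclusion, not an input, of the whole argument; as written it is circular (you also only get that $\pi_1(\widehat\Delta_H)$ \emph{surjects} onto $H$, which undermines the final step where strict inclusions of subgroups must be reflected by the graphs).

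The second, related, defect is in your choice of generic class. You take $C'(\lambda)$ plus ``finitely many auxiliary no short coincidence conditions,'' justified by a routine counting of coinciding subwords. That counting only yields small cancellation, and small cancellation alone cannot control $k$-generated subgroups: the group $G$ itself is $m$-generated and is not free, so any argument that treats $k$-generated subgroups via bounded Stallings-type graphs must use more. The extra ingredient the paper uses is the Arzhantseva--Ol'shanskii non-readability condition (item (3) of Definition~\ref{LML}): no subword $w$ of a relator with $|w|\ge |r_i|/2$ can be read in a folded $A$-graph with at most $\mu|w|$ edges, Betti number at most $k$, and a vertex of degree $<2m$. Its exponential genericity (Proposition~\ref{p:generic}) is a substantive theorem of Arzhantseva and Ol'shanskii, not a short coincidence count, and it is precisely what powers Proposition~\ref{p:red}: if a minimal graph of Betti number $\le k$ read too much of a relator, either an AO-move would shrink it (contradicting minimality) or the long subword would be $(\mu,k)$-readable (contradicting genericity). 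Everything downstream --- freeness, quasigeodesity of path labels, quasiconvexity --- flows from this. Without identifying and using this condition (or something equivalent), neither your saturation bound nor the final stabilization argument can be carried out.
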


Theorem~\ref{t:A} deals with the "basic" or "few relators" model of random groups, where the number $t\ge 1$ of defining relators is fixed and $n=\max_i |r_i|$ tends to infinity. The "density" model of random groups, for a fixed \emph{density} parameter $d\in ),1)$, considers group presentations $G=\langle a_1,\dots, a_m| r_1,\dots, r_t  \rangle$ where all $|r_i|=n$ and where $t=t_n$ grows with $n$ as $t_n=(2m-1)^{dn}$.  We denote $\mathcal C_m$ the set of all finite group presentations $\langle a_1,\dots, a_m|R\rangle$ where $R$ is a finite collection of nontrivial cyclically reduced words of equal length. Let $\mathcal P\subseteq \mathcal C_m$. We say that a presentation from $\mathcal C_m$  \emph{belongs to $\mathcal P$ with overwhelming probability} at density $0<d<1$ if

\[
 \lim_{n\to\infty}\frac{\mathcal S(\mathcal P,n,t_n)}{\mathcal S(\mathcal C_{m},n, t_n)}=1,
 \]
where $t_n=\lfloor (2m-1)^{dn}\rfloor$.

\begin{theorem}\label{t:B}
Let $m\ge 2, k\ge 1$ be integers. 

There exists $0<d_0=d_0(m,k)<1$ such that for every $0<d\le d_0$ with overwhelming probability a group $G$ given by a density-$d$ presentation on $m$ generators satisfies the Ascending Chain Condition $ACC_k$ for $k$-generated subgroups.
\end{theorem}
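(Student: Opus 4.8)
The plan is to deduce Theorem~\ref{t:B} from Theorem~\ref{t:A} by showing that the ACC-guaranteeing class $Q_{m,t,k}$ is robust enough to remain ``overwhelmingly large'' in the density model for small densities. The essential point is that both theorems concern the same underlying combinatorial/geometric mechanism: a generic set of defining relators produces a presentation whose Cayley complex (or its van Kampen diagrams) satisfies a sufficiently strong small-cancellation condition, say $C'(1/6)$ or better, and the ACC proof for $ACC_k$ is really a statement about the folding/Stallings-graph behavior of $k$-generated subgroups in such a geometry. So rather than re-running the entire argument, I would isolate the finite list of ``good'' combinatorial properties $\mathcal{E}_1,\dots,\mathcal{E}_N$ of the relator tuple $(r_1,\dots,r_t)$ that the proof of Theorem~\ref{t:A} actually uses, and verify that each $\mathcal{E}_i$ holds with overwhelming probability at density $d\le d_0$.

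\medskip

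First I would make explicit which properties of the relators drive the $ACC_k$ conclusion in Theorem~\ref{t:A}. For the few-relator model these are typically: (i) a small-cancellation bound on the maximal piece length relative to $\min_i|r_i|$; (ii) the absence of short ``unexpected'' coincidences between subwords of the $r_i$ and reduced words arising from foldings of rank-$k$ subgroups; and (iii) some quantitative isoperimetric control on van Kampen diagrams bounding how a $k$-generated subgroup's Stallings graph can grow. Each of these is a statement that certain ``bad'' configurations occur among the relators, and each bad configuration is defined by a matching condition on $O(1)$ letters. The number of relator tuples exhibiting a fixed bad configuration is smaller by a factor of roughly $(2m-1)^{-cn}$ than the total count $\mathcal{S}(\mathcal{C}_m,n,t_n)$, because prescribing a coincidence forces a definite number of letters.

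\medskip

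The core technical step, and the main obstacle, is the union bound over all $t_n=\lfloor(2m-1)^{dn}\rfloor$ relators and all pairs of relators. In the density model the number of relators grows exponentially in $n$, so naive probability estimates that were automatic for fixed $t$ can fail. The plan is to show that for a bad event involving a matching between (up to) a bounded number of relators, the expected number of witnesses is bounded above by something like $\binom{t_n}{2}(2m-1)^{-cn}$, which equals $(2m-1)^{2dn}(2m-1)^{-cn}=(2m-1)^{(2d-c)n}$. Choosing $d_0=d_0(m,k)$ so that $2d_0<c$ for the smallest relevant constant $c$ appearing across $\mathcal{E}_1,\dots,\mathcal{E}_N$ makes every such expectation tend to $0$ exponentially, and Markov's inequality then gives that each $\mathcal{E}_i$ holds with overwhelming probability. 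The delicate accounting is that the folding argument may involve configurations spanning more than two relators; if a bad event can involve up to $j$ relators with a matching that forces $cn$ letters, the expected count is $\binom{t_n}{j}(2m-1)^{-cn}\le (2m-1)^{jdn-cn}$, so one needs $jd<c$ for every relevant $(j,c)$ pair, and $d_0$ is taken to be the minimum of $c/(2j)$ over this finite list.

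\medskip

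Finally, conditioning on all the $\mathcal{E}_i$ simultaneously, I would invoke the exact same geometric argument as in the proof of Theorem~\ref{t:A} to conclude $ACC_k$: once the relator tuple lies in the (now density-model) good event $\bigcap_i \mathcal{E}_i$, the combinatorial hypotheses feeding the Stallings-graph/folding analysis are satisfied verbatim, and the termination of ascending chains of $k$-generated subgroups follows identically. The only genuinely new content is the probabilistic bookkeeping in the previous paragraph; the geometric heart is inherited. I expect the main difficulty to be pinning down the precise dependence of the constant $c$ on $k$ so that $d_0(m,k)$ is well-defined and strictly positive, since larger $k$ permits more complex foldings and hence potentially longer forced matchings with a correspondingly smaller saving exponent $c$.
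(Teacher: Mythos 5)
Your overall route coincides with the paper's: reduce to the deterministic statement (Theorem~\ref{t:ACC}, which says that any presentation satisfying the $(\lambda,\mu,k)$-condition, with any number of relators, has $ACC_k$) and then show that this condition holds with overwhelming probability at sufficiently small density. The difference is in how the probabilistic step is handled. The paper does it entirely by citation: Ollivier~\cite{Oll07} gives the $C'(\lambda)$ condition with overwhelming probability at low density, Proposition~\ref{p:generic} (the $t=1$ case) gives exponential genericity of the single-word conditions (no proper powers, no long $(\mu,k)$-readable subwords), and Corollary~4.3 of \cite{KS08} is exactly the transfer principle converting exponential genericity of word properties into overwhelming probability in the density model for $d$ below the genericity exponent. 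You propose instead to reprove this transfer by first-moment/union-bound estimates; that is legitimate (it is how such transfer results are proved) and your bookkeeping $\binom{t_n}{j}(2m-1)^{-cn}$ with $d_0<c/j$ is the right shape, with $j=2$ sufficing for the small cancellation condition and $j=1$ for the single-word conditions.

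One caveat, which is the only place your sketch would actually break if executed literally: you justify the exponential rarity of every bad event by saying it is ``defined by a matching condition on $O(1)$ letters'' so that ``prescribing a coincidence forces a definite number of letters.'' That is accurate for failures of $C'(\lambda)$ (a long piece shared between two relators, or between a relator and its own cyclic permutation, forces letter coincidences), but it is not how the $(\mu,k)$-readability condition works. That condition concerns a single relator, and the bad event is that a subword of length $\ge n/2$ can be read in a folded $A$-graph with few edges, Betti number $\le k$, and a vertex of degree $<2m$; its exponential rarity is an entropy/path-counting estimate for sparse folded graphs (the Arzhantseva--Ol'shanskii argument), not a coincidence count. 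To make your plan rigorous you should not re-derive this from scratch via matchings, but take the exponential genericity rate $\epsilon=\epsilon(m,\lambda,\mu,k)$ supplied by Proposition~\ref{p:generic} as your constant $c$ for the $j=1$ events and then run your union bound over the $t_n$ relators, requiring $d_0<\epsilon$. With that substitution (and the harmless preliminary reduction to $k\ge m$, which the paper performs so that Theorem~\ref{t:ACC} applies), your argument closes and agrees with the paper's in substance.
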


All of the $AC_k$ groups produced by the proofs of Theorem~\ref{t:A} and Theorem~\ref{t:B} satisfy the $C'(1/6)$ small cancellation condition and therefore are word-hyperbolic. Note that there do exist torsion-free word-hyperbolic groups where already $AC_2$ fails. Consider for example, the group 
\[
G=\langle a, b, t| t^{-1}at=ab^2a, t^{-1}bt=ba^2b\rangle.
\]
This group arises as the mapping torus of the injective non-surjective endomorphism $\phi:F(a,b)\to F(a,b)$, $\phi(a)=ab^2a$, $\phi(b)=ba^2b$. The subgroup $\phi(F(a,b))\lneq F(a,b)$ is malnormal in $F(a,b)$ and $\phi$ is an expanding immersion. Then it follows from the Bestvina-Feighn Combination Theorem~\cite{BF92} that $G$ is word-hyperbolic (see \cite{K00} for details). Put $H_n =t^n F(a,b) t^{-n}$ where $n=0,1, 2,\dots$.
Then
\[
H_0\lneq H_1 \lneq H_2\lneq \dots
\]
is an infinite strictly ascending chain of subgroups of of $G$ where each $H_i$ is free of rank $2$. Thus $ACC_2$ fails for $G$. 

%Note that in this example the subgroup $H_0=F(a,b)\le G$ is not quasiconvex in $G$. By contrast, for the groups $G$ produced by the proofs of Theorem~\ref{t:A} and Theorem~\ref{t:B} all $k$-generated subgroups are quasiconvex. It seems reasonable to conjecture that if $G$ is a torsion-free word-hyperbolic group where all $k$-generated subgroups are quasiconvex then $G$ satisfies $ACC_k$.

\section{Representing subgroups by labeled graphs}

\begin{conv}
 For the remainder of this paper, unless specified otherwise, let $A=\{a_1,\dots, a_m\}$ be a finite alphabet, where $m\ge 2$, which will be the
  marked set of generators of the group $G$ under consideration.  We denote $F_m=F(A)=F(a_1,\dots, a_m)$, the free group on $A$.
\end{conv}

Following the approach of Stallings~\cite{Sta}, we use labeled graphs
to study finitely generated subgroups of quotients of $F_m$.
\cite{Sta,Sch,AO,KM}. 

By a \emph{graph} we mean a 1-dimensional CW-complex $\Gamma$. We refer to $0$-cells of $\Gamma$ as \emph{vertices} and to open $1$-cells of $\Gamma$ as \emph{topological edges}. An \emph{oriented edge} of $\Gamma$ is a topological edge with a choice of an orientation on it. For an oriented edge $e$ we denote by $e^{-1}$ the same edge with the opposite orientation. We denote the set of all vertices of $\Gamma$ by $V\Gamma$ and the set of all oriented edges of $\Gamma$ by $E\Gamma$. For an oriented edge $e\in E\Gamma$ the attaching maps define its \emph{initial vertex} $o(e)\in V\Gamma$ and its terminal vertex $t(e)\in E\Gamma$.  For a vertex $v\in \Gamma$ the \emph{degree} $deg_\Gamma(v)$ of $v$ is the number of all $e\in E\Gamma$ with $o(e)=v$. An \emph{edge-path} in $\Gamma$ is a sequence of edges $\gamma=e_1,\dots, e_k$ (where $k\ge 0$) of oriented edges of $\Gamma$ such that $t(e_i)=o(e_{i+1})$ for $1\le i<k$. We put $o(\gamma)=o(e_1)$, $t(\gamma)=t(e_k)$, $|\gamma|=k$ and $\gamma^{-1}=e_k^{-1},\dots, e_1$. For $k=0$ we view $\gamma=v\in V\Gamma$ as an edge-path with $o(\gamma)=t(\gamma)=v$, $|\gamma|=0$ and $\gamma^{-1}=\gamma=v$. We say that an edge-path $\gamma$ is \emph{reduced} if it has no subpaths of the form $e,e^{-1}$, where $e\in E\Gamma$. An \emph{arc} in a graph $\Gamma$ is a simple edge-path of positive length, possibly closed, where every intermediate vertex of the path has degree 2 in
$\Gamma$.  Thus in a finite graph $\Gamma$, every arc is contained in a unique \emph{maximal arc}, whose end-vertices have degree $\ge 3$ or degree $1$.

\begin{defn}
  An \emph{$A$-graph} $\Gamma$ consists of an underlying oriented
  graph where every (oriented) edge $e$ is labeled by an element
  $\theta(e)\in A^{\pm 1}$ in such a way that $\theta(e^{-1})=\theta(e)^{-1}$ for every edge
  $e$ of $\Gamma$. We allow multiple edges between vertices as well as
  edges which are loops.

  An $A$-graph $\Gamma$ is said to be \emph{non-folded} if there
  exists a vertex $x$ and two distinct edges $e_1, e_2$ with origin
  $x$ such that the words $\theta(e_1)=\theta(e_2)$.  Otherwise
  $\Gamma$ is said to be \emph{folded}.
\end{defn}

If $\gamma=e_1,\dots, e_k$ is an edge-path in an $A$-graph $\Gamma$, we put $\theta(\gamma)=\theta(e_1)\dots \theta(e_k)$. Thus $\theta(\gamma)$ is a word in $A^{\pm 1}$. For a path $\gamma$ with $|\gamma|=0$ we put $\theta(\gamma)$ to be the empty word $\epsilon$.

For a finite graph $\Gamma$ we denote by $b(\Gamma)$ the first Betti number of $\Gamma$. Thus if $\Gamma$ is connected then $b(\Gamma)=rank(\pi_1(\Gamma))$. 
For an edge-path $p$ in $\Gamma$ we denote by $o(p)$ the initial vertex of $p$ and by $t(p)$ the terminal vertex of $p$ in $\Gamma$. 

Every edge-path $p$ in an $A$-graph $\Gamma$ has a label $\theta(p)$ which is a word. The number of edges in $p$
will be called the \emph{length} of $p$ and denoted $|p|$.  For $F_m=\overline{F}(A)$
a path $p$ in an $A$-graph $\Gamma$ is said to be \emph{reduced}
if it does not contains subpaths of the form $e, e^{-1}$ and if
$p$ does not contain subpaths of the form $e,e$ where $e$ is an
edge of $\Gamma$.

The definition implies that an $A$-graph $\Gamma$ is folded if and only if the label of every reduced edge-path in $\Gamma$ is a reduced word.

\begin{defn}
Let $G=\langle A|R\rangle=\langle a_1,\dots, a_m| r_1, r_2\dots \rangle$ be a group presentation on the generators $a_1,\dots, a_m$, where each $r_i$ is a cyclically reduced word in $F(A)$.

Let $\Gamma$ be a connected $A$-graph with a base-vertex $x_0$. There is a natural labeling homomorphism $\Theta: \pi_1(\Gamma,x_0)\to G$ where for every closed edge-path $\gamma$ from $x_0$ to $x_0$ we put $\Theta([\gamma])=\theta(\gamma)\in G$.

We say that the subgroup $H=\Theta(\pi_1(\Gamma,x_0))\le G$ is \emph{represented} by $(\Gamma,x_0)$. 
\end{defn}

Note that if $H$ is represented by $(\Gamma,x_0)$, where $\Gamma$ is a finite connected $A$-graph, then $rank(H)\le b(\Gamma)$.

%We sometimes also will need the following move on $A$-graphs:
%\begin{defn}[Removing a degree-one vertex]
%  Suppose $e$ is an edge of an $A$-graph $\Gamma$ such that the vertex
%  $t(e)$ has degree one in $\Gamma$. We remove the edge $e$ and the
%  vertex $t(e)$ from $\Gamma$.
%\end{defn}

In addition to foldings, we need the following transformation of
labeled graphs used by Ol'shanskii and Arzhantseva \cite{AO96}.

\begin{defn}[Fold]
Let $\Gamma$ be a connected $A$-graph and let $e_1,e_2$ be two distinct oriented edges in $\Gamma$ with $o(e_1)=o(e_2)$ and $\theta(e_1)=\theta(e_2)=a\in A^{\pm 1}$.

A \emph{Stalling fold} on $e_1,e_2$ consists in producing a new $A$-graph $\Gamma'$ obtained from $\Gamma$ by identifying $e_1,e_2$ into a single oriented edge $e$ with label $\theta(e)=a$ (and identifying $t(e_1)$ and $t(e_2)$ into a single vertex if $t(e_1)\ne t(e_2)$ in $\Gamma$.  This fold is called \emph{singular} if $t(e_1)=t(e_2)$ and \emph{non-singular} if $t(e_1)=t(e_2)$. 
\end{defn}

\begin{defn}[Arzhantseva-Ol'shanskii move]\label{defn:AO} Let $p=p_1 p' p_2$ be a reduced edge-path in a finite connected
  $A$-graph $\Gamma$ such that $p'$ is an arc of $\Gamma$ and the
  paths $p_1$, $p_2$ do not overlap $p'$. Let $p$ have initial vertex
  $x$, terminal vertex $y$, and label $\theta(p)=v$.  Let $z$ be a
  reduced word such that $v =_G z$ in $G$.
  
  We now modify $\Gamma$ by adding a new arc $q$ from $x$ to $y$ with
  label $z$ and removing all the edges and interior vertices of $p'$ from $\Gamma$.
  
  We will say that the resulting $A$-graph $\Gamma'$ is obtained from
  $\Gamma$ by an \emph{$AO$-move} on the arc $p'$.
  
Note that we allow the case where $z=1$ is the trivial word, that is, where $v=_G 1$. In this case the $AO$ move removes the interior of $p'$ from $\Gamma$ and, if $x\ne y$ in $\Gamma$, identifies $x$ and $y$ into a single vertex. If we already had $x=y$ in $\Gamma$ then in this case the $AO$ move just removes the interior of $p'$ from $\Gamma$; we refer to the $AO$ moves of this latter type (where $z=1$ and $x=y$ in $\Gamma$) as \emph{singular}, and otherwise call an $AO$-move \emph{non-singular}. 
  
\end{defn}

We record here some important properties of moves on $A$-graphs that follow directly from the definitions (see also~\cite{AO96,KS05}).

\begin{prop}\label{p:AO-move}
Let $\Gamma$ be a finite connected $A$-graph with a base-vertex $x_0$ such that $(\Gamma,x_0)$ represents a subgroup $H\le G$.

\begin{enumerate}
\item Let $\Gamma'$ be obtained from $\Gamma$ by a fold and let $x_0'$ be the image of $x_0$ in $\Gamma'$. Then $(\Gamma',x_0')$ also represents $H\le G$, and $b(\Gamma')\le b(\Gamma)$. Moreover, $b(\Gamma')=b(\Gamma)$ if the fold is non-singular, and $b(\Gamma')=b(\Gamma)-1$ if the fold is singular.
\item Let $\Gamma'$ be obtained from $\Gamma$ by an AO-move on an arc $p'$ such that $x_0$ is not an interior vertex of $p'$.  [If the $AO$-move was on an arc $p'$ with label $v=_G 1$ with endpoints $x\ne y$ and if $x_0\in \{x,y\}$, we still denote the image of $x_0$ in $\Gamma'$ by $x_0$.]

Then $(\Gamma',x_0)$ also represents $H\le G$, and $b(\Gamma')\le b(\Gamma)$. Moreover, $b(\Gamma')= b(\Gamma)$ if the AO-move is nonsingular, and $b(\Gamma')= b(\Gamma)-1$ if the AO-move is singular.

\end{enumerate}

\end{prop}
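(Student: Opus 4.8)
The plan is to prove the two parts separately, in each case splitting the statement into the invariance of the represented subgroup $H$ and the computation of $b$. For the Betti number I would work with the Euler characteristic: for a finite connected graph $\Delta$ one has $b(\Delta)=1-\chi(\Delta)$ with $\chi(\Delta)=|V\Delta|-e(\Delta)$, where $e(\Delta)$ is the number of topological edges, so the effect of any move on $b$ is read off from how many vertices and topological edges it creates or destroys. For the subgroup I would repeatedly use the elementary principle that if a move enlarges $\pi_1$ by one free generator that can be represented by a loop with $G$-trivial label, then $\Theta(\pi_1)$ is unchanged, and dually that deleting part of the graph keeps $\Theta(\pi_1)=H$ as soon as every loop through the deleted part can be rerouted by a loop with the same $G$-label.

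For part (1) the invariance of $H$ is the standard property of Stallings folds: the folding map $\phi\colon\Gamma\to\Gamma'$ preserves labels and is surjective on $\pi_1$, and $\Theta=\Theta'\circ\phi_\ast$, whence $\Theta'(\pi_1(\Gamma',x_0'))=\Theta(\pi_1(\Gamma,x_0))=H$. For $b$ I distinguish the two cases. A non-singular fold, with $t(e_1)\ne t(e_2)$, identifies two edges and their distinct termini, removing one topological edge and one vertex, so $\chi$ and hence $b$ are unchanged. A singular fold, with $t(e_1)=t(e_2)$, merges two parallel edges, removing one topological edge and no vertex, so $\chi$ increases by $1$ and $b$ decreases by $1$; this is consistent with the invariance of $H$, since the destroyed loop $e_1e_2^{-1}$ carries the label $aa^{-1}=_G 1$.

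For part (2) I would factor the $AO$-move into two elementary steps and track $H$ and $b$ through each. Write $o(p)=x$, $t(p)=y$, $\theta(p)=v$, and take $z$ reduced with $v=_G z$. Consider first the main case $z\ne 1$. Step one adjoins the new arc $q$ from $x$ to $y$ labelled $z$, yielding $\Gamma^+$; since $x,y$ already lie in one component this raises $b$ by $1$, and a free basis of $\pi_1(\Gamma^+,x_0)$ is obtained from one of $\pi_1(\Gamma,x_0)$ by adjoining one class represented by a loop that crosses $q$ once and closes up through $p$ in $\Gamma$, with label $zv^{-1}=_G 1$; hence $\Theta^+(\pi_1(\Gamma^+,x_0))=H$. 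Step two excises the interior and edges of $p'$: as the interior vertices of $p'$ have degree $2$ and $q$ meets the old graph only at $x,y$, every reduced loop through $p'$ runs from $o(p')$ to $t(p')$ and may be rerouted along $p_1^{-1}q\,p_2^{-1}$, whose label $\theta(p_1)^{-1}z\,\theta(p_2)^{-1}$ equals $\theta(p')$ in $G$ because $v=\theta(p_1)\theta(p')\theta(p_2)=_G z$; thus $H$ is preserved, while deleting the arc $p'$ lowers $b$ by $1$, giving $b(\Gamma')=b(\Gamma)$ as claimed for a non-singular move. The value $z=1$ splits according to the definition. If $x\ne y$ (still non-singular) one identifies $x$ and $y$ instead of adjoining $q$: this raises $b$ by $1$, and the new basis element is the loop that $p$ becomes, with label $v=_G 1$, so $H$ is unchanged; excising $p'$ then lowers $b$ by $1$. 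If $x=y$ (singular) nothing is adjoined or identified and one only excises $p'$, so $b$ drops by $1$; here $H$ is preserved by rerouting $p'$ along $p_1^{-1}p_2^{-1}$, whose label is again $\theta(p')$ in $G$. In all cases the inequality $b(\Gamma')\le b(\Gamma)$ is immediate from these exact values.

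I expect the main difficulty to be the degenerate-case bookkeeping in part (2) rather than any conceptual point: one must separately confirm the Euler-characteristic count and the connectedness of $\Gamma'$ when $p_1$ or $p_2$ is trivial, when $p'$ is a closed arc, and in the two branches of the $z=1$ case. The two standing hypotheses of Definition~\ref{defn:AO} --- that $p'$ is an arc (interior vertices of degree $2$) and that $p_1,p_2$ do not overlap $p'$ --- are precisely what guarantee that the interior of $p'$ can be excised without disturbing $p_1,p_2$ and that the rerouting loops are well defined, so the task is to invoke these hypotheses carefully at each step rather than to supply a new idea.
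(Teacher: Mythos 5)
Your proposal is correct, but there is nothing in the paper to compare it against: the paper states this proposition without proof, remarking only that these properties ``follow directly from the definitions'' and pointing to \cite{AO96,KS05}. What you have written is precisely the standard verification that the paper leaves implicit, and it works: the Euler-characteristic bookkeeping ($b=1-\chi$, $\chi=|V|-e$) gives the exact Betti-number values in every case, and the subgroup invariance follows from your two principles --- a move that kills a $\pi_1$-generator is harmless when that generator is represented by a loop with $G$-trivial label (the fold case $aa^{-1}$, the singular AO case $v=_G1$), and excising the arc $p'$ is harmless because every passage of a loop through $p'$ must enter and exit at the endpoints of $p'$ (its interior vertices have degree $2$ and $x_0$ is not among them) and can be rerouted along $p_1^{-1}qp_2^{-1}$, whose $G$-label equals $\theta(p')$ since $\theta(p_1)\theta(p')\theta(p_2)=v=_Gz$. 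Two points worth making explicit when you write this up. First, the paper's definition of a fold contains a typo (both ``singular'' and ``non-singular'' are defined by the condition $t(e_1)=t(e_2)$); your reading --- singular if and only if $t(e_1)=t(e_2)$ --- is the one consistent with the Betti-number assertions of the proposition, so state it. Second, since $b=1-\chi$ is valid only for connected graphs, you should record why $\Gamma'$ stays connected after the excision: the paths $p_1,p_2$ share no edges with $p'$, hence avoid its interior vertices entirely, so they survive the excision and keep the endpoints of $p'$ attached to $x$ and $y$ (and $q$, or the identification of $x$ with $y$, keeps those attached to each other); any other vertex whose connection to $x_0$ ran through $p'$ is handled by the same rerouting. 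This is exactly the ``degenerate-case bookkeeping'' you flagged at the end, and it does go through in all the configurations you list, including $p'$ closed and $p_1$ or $p_2$ trivial.
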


\section{Equality diagrams in small cancellation groups}

Let $G$ be a group with a finite generating set $A$. As usual, for a word $w$ over $A^{\pm 1}$ we denote by $\overline w$ the element of $G$ represented by $A$. We also denote by $d_A$ the word metric on $G$ corresponding to $A$.
Recall that for $C\ge 1$, a word $w$ over $A^{\pm 1}$ is called a $(C,0)$-quasigeodesic in $G$ if for every subword $v$ of $w$ we have $|v|\le C d_A(1, \overline v)$.  We refer the reader to \cite[Ch. V]{LS} for background info on small cancellation groups.

\begin{defn}[$\lambda$-reduced words]
Let 
\[ 
G=\langle a_1,\dots, a_m | R\rangle 
\] be a $C'(\lambda)$-presentation, where $0<\lambda\le 1/6$. 
  
  A word $w$ over $A^{\pm 1}$ is called \emph{$\lambda$-reduced} if $w$ is freely reduced and if whenever $w$ contains a subword $v$ such that $v$ is also a subword of some cyclic permutation of $r$ or $r^{-1}$ for some $r\in R$ then $|v|\le (1-3\lambda)|r|$.
\end{defn}
Note that of $\lambda=1/6$ then being $\lambda$-reduced is the same as being $\lambda$-reduced. In general, for $0<\lambda\le 1/6$ being Dehn-reduced implies being $\lambda$-reduced. 

Greendlinger's lemma (\cite[Theorem 4.4, Ch. V]{LS}) implies:

\begin{prop}\label{p:green}
Let \[ 
G=\langle a_1,\dots, a_m | R\rangle 
\] be a $C'(\lambda)$-presentation, where $0<\lambda\le 1/6$. 
Let $w$ be a nontrivial freely reduced word over $A^{\pm 1}$ such that $w=_G 1$. Then $w$ is not $\lambda$-reduced.
\end{prop}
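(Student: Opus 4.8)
The plan is to deduce this from Greendlinger's Lemma, cited above as \cite[Theorem 4.4, Ch.~V]{LS}, after first reducing to the cyclically reduced case. First I would write $w$ in its cyclically reduced form $w = u w_0 u^{-1}$, an equality of freely reduced words with no cancellation at the junctions, where $w_0$ is cyclically reduced. Since $w$ is freely reduced and nontrivial, $w_0$ cannot be empty (otherwise $w = u u^{-1}$ would force $u$, and hence $w$, to be trivial), so $w_0 \neq 1$; and from $u w_0 u^{-1} =_G 1$ we get $w_0 =_G 1$. The key observation for the reduction is that $w_0$ occupies a single contiguous block of the linear word $w$, so any honest (contiguous) subword of $w_0$ is automatically a subword of $w$. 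Hence it suffices to prove that the cyclically reduced word $w_0$ is not $\lambda$-reduced.

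Next I would apply Greendlinger's Lemma to $w_0 =_G 1$. If $w_0$ is a cyclic permutation of $r^{\pm 1}$ for some $r \in R$, then $w_0$ itself is a subword of a cyclic permutation of $r^{\pm 1}$ of length $|w_0| = |r| > (1-3\lambda)|r|$, so taking $v = w_0$ already shows that $w_0$ is not $\lambda$-reduced. Otherwise I would invoke the strong form of Greendlinger's Lemma, which produces \emph{two} disjoint subwords $s_1, s_2$ of the cyclic word $w_0$, each of which is a subword of some cyclic permutation of $r_i^{\pm 1}$ with $r_i \in R$ and $|s_i| > (1-3\lambda)|r_i|$. Either of these, \emph{if} it were a contiguous subword of the linear word $w_0$, would witness that $w_0$ is not $\lambda$-reduced.

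The one genuine subtlety, and the step I expect to be the main obstacle, is precisely that a subword of the \emph{cyclic} word $w_0$ need not be a contiguous subword of the linear word $w_0$: it may wrap around the seam, i.e.\ around the basepoint $*$ of the boundary cycle that marks the start/end of the linear word. This is exactly why the one-piece version of Greendlinger's Lemma is not sufficient here: the single piece it provides could straddle $*$, and splitting it at $*$ would only yield two linear subwords of length roughly $\frac{1}{2}|s|$, which need not exceed $(1-3\lambda)|r|$. The two-piece form circumvents this: the arcs carrying $s_1$ and $s_2$ are interior-disjoint on the boundary circle, so the single point $*$ can lie in the interior of at most one of them. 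Consequently at least one of $s_1, s_2$ has $*$ outside its interior and is therefore a bona fide contiguous subword of $w_0$, hence of $w$. That subword certifies that $w$ is not $\lambda$-reduced, which completes the argument.
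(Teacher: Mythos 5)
Your proof is correct and takes essentially the same route as the paper: the paper offers no detailed argument, deriving the proposition directly from Greendlinger's Lemma \cite[Theorem 4.4, Ch.~V]{LS}, and your write-up is exactly that derivation with the details filled in. In particular, the reduction to the cyclically reduced core and the use of the two-region (strong) form of Greendlinger's Lemma to obtain a witness subword that does not wrap around the seam are precisely the points the paper leaves implicit, and you handle them correctly.
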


Thus for a
$C'(\lambda)$-presentation with $\lambda\le 1/6$, a nontrivial $\lambda$-reduced word in $F(a_1,\dots, a_m)$ represents a
nontrivial element of $G$.

The following proposition follows from the basic results of small cancellation
theory, established in Ch. V, Sections 3-5 of \cite{LS}. The proof of this proposition is essentially identical to the proof of Lemma~2.11 in \cite{KS04}
(and is similar to the proof of Proposition~39 in \cite{Stre}). For these reasons we omit the details.

\begin{prop}\label{p:e}[Equality diagrams in
  $C'(\lambda)$-groups]

Let \[ G=\langle a_1,\dots, a_m |
  R\rangle\tag{$\ast$} \] be a $C'(\lambda)$-presentation, where
  $\lambda\le 1/6$.

Let $w_1,w_2\in F(a_1,\dots, a_m)$ be freely reduced and
  $\lambda$-reduced words such that $w_1=_G w_2$.

  Then any reduced van Kampen diagram $D$ over $(\ast)$, realizing the equality $w_1=_G
  w_2$, has the form as shown in Figure~\ref{Fi:equ}. Specifically, any
  region $Q$ of $D$ labeled by $r\in R$ intersects both the upper
  boundary of $D$ (labeled by $w_1$) and the lower boundary of $D$
  (labeled by $w_2$) in simple segments $\alpha_1, \alpha_2$
  accordingly, satisfying \[ \lambda|r|\le |\alpha_j|\le
  (1-3\lambda)|r|.  \] Moreover, if two regions $Q,Q'$ in $D$,
  labeled by $r,r'\in R$, have a common edge, then they intersect in
  closed simple segment $\gamma$ joining a point of the upper boundary
  of $D$ with a point of the lower boundary of $D$ and labeled by a
  piece with respect to $R$.  In particular $|\gamma|<\lambda |r|$ and
  $|\gamma|< \lambda|r'|$.

\begin{figure}[htb]
		\scalebox{.9}{\includegraphics{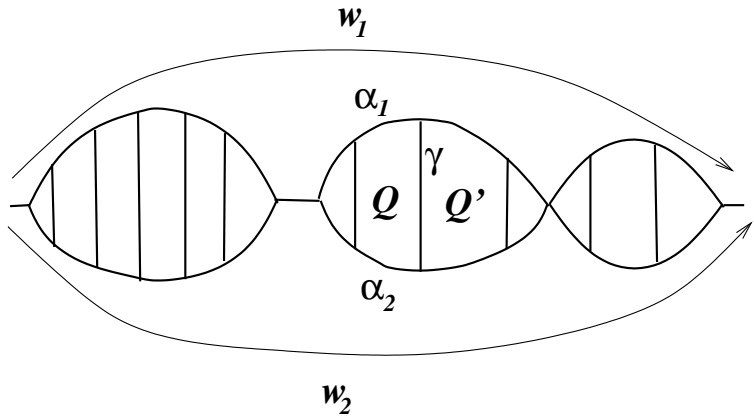}}
		\caption{Equality diagram in a small cancellation
                  group}
\label{Fi:equ}

\end{figure}

\end{prop}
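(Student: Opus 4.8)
The plan is to analyze a reduced, simply connected (disc) van Kampen diagram $D$ over $(\ast)$ whose boundary cycle reads $w_1w_2^{-1}$, with a distinguished upper boundary path $\partial_+D$ labeled $w_1$ and lower boundary path $\partial_-D$ labeled $w_2$, both running from a corner vertex $x$ to a corner vertex $y$. If $w_1=w_2$ as freely reduced words then $D$ contains no regions and there is nothing to prove, so I assume $D$ has at least one region; reducedness lets me discard spurs and assume $D$ is non-degenerate. The two length estimates of the statement are purely local and follow at once from the hypotheses. Any sub-arc of $\partial Q$ lying on $\partial_+D$ reads a subword of $w_1$ which is simultaneously a subword of a cyclic permutation of $r$ or $r^{-1}$; since $w_1$ is $\lambda$-reduced this sub-arc has length at most $(1-3\lambda)|r|$, and symmetrically on $\partial_-D$ using that $w_2$ is $\lambda$-reduced. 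Likewise, any arc shared by two regions labeled $r,r'$ is, by definition, a piece with respect to $R$, so the $C'(\lambda)$ condition gives $|\gamma|<\lambda|r|$ and $|\gamma|<\lambda|r'|$.

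The heart of the argument is to prove that $D$ is a \emph{ladder}: the regions form a single chain $Q_1,\dots,Q_N$ in which consecutive regions $Q_i,Q_{i+1}$ meet along exactly one interior arc $\gamma_i$ joining $\partial_+D$ to $\partial_-D$, each $Q_i$ meets $\partial_+D$ and $\partial_-D$ each in a single simple arc, and $D$ has no interior regions and no branching. This is the step where the small cancellation structure theory of \cite[Ch.~V, \S\S3--5]{LS} enters, via the combinatorial Gauss--Bonnet (curvature) estimate for the planar complex $D$. First I would rule out interior regions: the boundary of such a region is a concatenation of pieces, each of length $<\lambda|r|\le|r|/6$, so it would have more than six sides, whence the $C'(1/6)$ curvature bound forces the positive curvature to accumulate on $\partial D$, producing a boundary region a single sub-arc of whose boundary, lying on $\partial_+D$ or $\partial_-D$, has length exceeding $(1-3\lambda)|r|$. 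This contradicts the $\lambda$-reducedness of $w_1$ and $w_2$ established above. The same mechanism forbids branching and shows each region straddles both sides, pinning $D$ down to the linear ladder shape. This is exactly Strebel's classification of reduced $C'(1/6)$ diagrams whose boundary splits into two quasigeodesic pieces, the bigon case of which is the ladder (cf.\ Proposition~39 of \cite{Stre} and Lemma~2.11 of \cite{KS04}).

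Granting the ladder structure, the lower bound $\lambda|r|\le|\alpha_j|$ is a one-region length count. For a region $Q_i$ labeled $r$, its boundary decomposes as $\alpha_1$ on $\partial_+D$, $\alpha_2$ on $\partial_-D$, and the (at most two) interior arcs to its neighbors, so $|r|=|\alpha_1|+|\alpha_2|+|\gamma_{i-1}|+|\gamma_i|$. Using $|\alpha_2|\le(1-3\lambda)|r|$ together with $|\gamma_{i-1}|,|\gamma_i|<\lambda|r|$ yields $|\alpha_1|>|r|-(1-3\lambda)|r|-2\lambda|r|=\lambda|r|$, and symmetrically $|\alpha_2|>\lambda|r|$; the two end regions carry only one interior neighbor-arc and so satisfy the bound a fortiori. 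This, combined with the upper bounds and the piece bound from the first paragraph, gives the full conclusion and the shape shown in Figure~\ref{Fi:equ}.

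The main obstacle is entirely the global step of the second paragraph: translating the $C'(1/6)$ curvature inequality, combined with the quasigeodesic-type constraint furnished by $\lambda$-reducedness, into the rigid statement that $D$ is a linear ladder with no interior regions and no branching. The local length bounds and the final count are routine bookkeeping, so all the geometric force of the proposition sits in this classification of the diagram's shape, which is why the statement can be imported directly from \cite{Stre, KS04} rather than reproved from scratch.
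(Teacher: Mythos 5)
Your proposal is correct and takes essentially the same route as the paper: the paper deliberately omits the proof because the global step (the ladder classification of reduced $C'(\lambda)$ bigon diagrams) is exactly Lemma~2.11 of \cite{KS04} and Proposition~39 of \cite{Stre}, which is precisely where you anchor your argument, while the remaining pieces --- the bound $|\alpha_j|\le (1-3\lambda)|r|$ from $\lambda$-reducedness, the bound $|\gamma|<\lambda|r|$ from the $C'(\lambda)$ condition on pieces in a reduced diagram, and the counting argument giving $\lambda|r|\le |\alpha_j|$ --- are the routine bookkeeping the paper treats as implicit. No gaps; your filled-in derivation of the lower bound matches what the cited results yield.
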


Proposition~\ref{p:e} immediately implies:

\begin{cor}\label{c:q}
Let Let \[ G=\langle a_1,\dots, a_m |
  R\rangle \] be a $C'(\lambda)$-presentation, where
  $0<\lambda\le 1/6$. Put $C=\frac{1-3\lambda}{\lambda}$. [Note that $0<\lambda\le 1/6$ implies $C\ge 1$.]
  
  Then every $\lambda$-reduced word $w\in F(A)$ is a $(C,0)$-quasigeodesic in the Cayley graph of $G$ with respect to $A$.

\end{cor}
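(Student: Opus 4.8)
The plan is to read the quasigeodesic estimate directly off the geometry supplied by Proposition~\ref{p:e}. Let $w\in F(A)$ be $\lambda$-reduced, and let $v$ be an arbitrary subword of $w$. Since any subword of a $\lambda$-reduced word is again freely reduced and $\lambda$-reduced, it suffices to prove the single inequality $|v|\le C\, d_A(1,\overline v)$ for an arbitrary $\lambda$-reduced $v$; the claim for subwords then follows by applying this to each subword. So I would first reduce the statement to: for every $\lambda$-reduced $v$, one has $|v|\le C\, d_A(1,\overline v)$.

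Next I would set up the comparison with a geodesic. Write $n=d_A(1,\overline v)$ and pick a geodesic word $u$ with $|u|=n$ and $\overline u=\overline v$ in $G$. A geodesic word is certainly freely reduced, and I would argue that it is $\lambda$-reduced as well: if $u$ contained a subword $s$ that is also a subword of a cyclic permutation of some $r^{\pm 1}$ with $|s|>(1-3\lambda)|r|$, then the relator $r$ would let us replace $s$ by the complementary arc of $r$, which has length $<3\lambda|r|\le |s|$ (using $\lambda\le 1/6$, so $3\lambda\le 1/2\le 1-3\lambda$ fails to be automatic — here I must be careful), contradicting that $u$ is geodesic. The safe route is simply to invoke that geodesics are $\lambda$-reduced because any non-$\lambda$-reduced word can be shortened using a relator, which contradicts minimality of length. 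With both $v$ and $u$ freely reduced and $\lambda$-reduced and $v=_G u$, Proposition~\ref{p:e} applies to a reduced van Kampen diagram $D$ realizing this equality.

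Now I would extract the length bound from the structure of $D$. The upper boundary of $D$ is labeled by $v$ and the lower boundary by $u$. By Proposition~\ref{p:e}, every region $Q$ of $D$ meets the upper boundary in a simple segment $\alpha_1$ and the lower boundary in a simple segment $\alpha_2$, with
\[
\lambda|r|\le |\alpha_j|\le (1-3\lambda)|r|.
\]
The upper boundary decomposes into the arcs $\alpha_1$ coming from the successive regions (together with any edges lying on both boundaries, if present), so $|v|=\sum |\alpha_1^{(i)}|$ over the regions, and similarly $|u|=\sum|\alpha_2^{(i)}|$. For each region the two bounds give
\[
|\alpha_1^{(i)}|\le (1-3\lambda)|r_i|\le \frac{1-3\lambda}{\lambda}\,\lambda|r_i|\le \frac{1-3\lambda}{\lambda}\,|\alpha_2^{(i)}|=C\,|\alpha_2^{(i)}|,
\]
and summing over $i$ yields $|v|\le C|u|=C\,n=C\,d_A(1,\overline v)$. (If $v=_G u$ already as reduced words, i.e. the diagram is degenerate, then $|v|=|u|=n$ and the inequality holds trivially since $C\ge 1$.)

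The main obstacle I anticipate is the bookkeeping at the boundary of $D$: I must ensure that the simple segments $\alpha_1^{(i)}$ genuinely partition the upper boundary (and likewise the $\alpha_2^{(i)}$ the lower boundary) so that the lengths sum correctly, accounting for the possibility of boundary edges shared by no region or lying on both the top and bottom of $D$. The second delicate point is justifying that a geodesic word is $\lambda$-reduced so that Proposition~\ref{p:e} is applicable with $u$ as the lower word; this uses Greendlinger's lemma (Proposition~\ref{p:green}) or, more directly, the fact that a word which is not $\lambda$-reduced can be shortened via a defining relator, contradicting minimality of $|u|$. Once these two points are handled, the per-region estimate above gives the quasigeodesic constant $C=\frac{1-3\lambda}{\lambda}$ immediately, and $C\ge 1$ follows from $0<\lambda\le 1/6$ as noted in the statement.
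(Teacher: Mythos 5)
Your proposal is correct and follows the same route the paper intends: the paper gives no separate argument beyond ``Proposition~\ref{p:e} immediately implies'' the corollary, and your write-up is exactly the fleshed-out version of that — compare the $\lambda$-reduced word with a geodesic representative (which is $\lambda$-reduced by the shortening argument), apply the equality-diagram structure, and sum the per-region bounds $\lambda|r|\le|\alpha_j|\le(1-3\lambda)|r|$ to obtain the constant $C=\frac{1-3\lambda}{\lambda}$. The only point worth making explicit is that the degenerate case $\overline v=1$ with $v$ nontrivial must be excluded by Proposition~\ref{p:green} (otherwise no finite $C$ works), but you have the right tool in hand and the bookkeeping with boundary segments shared by the two sides goes through precisely because $C\ge 1$.
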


%\begin{figure}
%\centering
%\includegraphics{equal.pdf}
%\caption{Equality diagram in a small cancellation group}\label{Fi:eq}
%\end{figure}

\section{Genericity conditions}

We will need some genericity consitions introduced by Arzhansteva and Ol'shanskii~\cite{AO96} and further explored in \cite{A1,A2, KS05}.
Recall that $m\ge 2$, $A=\{a_1,\dots, a_m\}$ and $F_m=F(A)$ are fixed.

%\begin{defn}\cite{AO96}
%  Let $0< \mu< 1$ be a real number. A nontrivial freely reduced word $w$ in
%  $F(A)=F(a_1,\dots, a_m)$  is called
 % \emph{$\mu$-readable} if there exists a connected finite folded connected $A$-graph
 % $\Gamma$ such that:

%\begin{enumerate}
%\item The number of topological edges in $\Gamma$ is at most $\mu |w|$.
%\item We have $b(\Gamma)\le m-1$.
%\item There is a reduced path in $\Gamma$ with label $w$.
%\end{enumerate}
%\end{defn}

The following genericity condition was defined by Arzhantseva in \cite{A1} and generalizes an earlier version of this condition defined by Arzhantseva and Ol'shansky in \cite{AO96}.

\begin{defn}\cite{A1}
  Let $0< \mu< 1$ be a real number and let $k\ge 2$ be an integer. A
  nontrivial freely reduced word $w$ in $F(A)$  is called
  \emph{$(\mu,k)$-readable} if there exists a finite connected folded
  $A$-graph $\Gamma$ such that:

\begin{enumerate}
\item The number of edges in $\Gamma$ is at most $\mu |w|$.
\item We have $b(\Gamma)\le k$. 
\item There is a path in $\Gamma$ with label $w$.
\item The graph $\Gamma$ has at least one vertex of degree $< 2m$.
\end{enumerate}
\end{defn}
Note that if $k\le m-1$ then any finite folded $A$-graph $\Gamma$ with $b(\Gamma)\le k$ has a vertex of degree $<2m$, so that (2) is redundant in this case.

\begin{defn}\label{LML}
  Let $0< \mu<1, 0<\lambda\le 1/6$ be a real numbers, let $t\ge 1$ and $k\ge m$ be integers.
 We will say that a tuple of nontrivial cyclically reduced words $(r_1,\dots, r_t)$ in $F(A)$ satisfies the \emph{$(\lambda, \mu,
   k)$-condition} if:

\begin{enumerate}
\item The words $r_i$ are  not a proper powers in $F(A)$.
%\item For $i\ne j$ $r_i$ is not conjugate to $r_j^{\pm 1}$ in $F(A)$.
\item The symmetrized closure of $\{r_1,\dots, r_t\}$ satisfies the $C'(\lambda)$ small
  cancellation condition.
\item If $w$ is a subword of a cyclic permutation of some $r_i^{\pm 1}$ and
  $|w|\ge |r_i|/2$ then $w$ is not $(\mu,k)$-readable.
\end{enumerate}

For a fixed integer $t\ge 1$ we denote by $Q_{m,t}(\lambda,\mu,k)$ the set of all finite presentations $\langle a_1,\dots, a_m| r_1,\dots, r_t\rangle$ satisfying the $(\lambda, \mu, k)$-condition.
We also denote $Q_{m}(\lambda,\mu,k)=\cup_{t=1}^\infty Q_{m,t}(\lambda,\mu,k)$.

\end{defn}

The results of Arzhantseva-Ol'shanskii~\cite{AO96} and Arzhantseva~\cite{A1} imply:

\begin{prop}\label{p:generic}
Let $m\ge 2, t\ge 1, k\ge m$ be integers. Let $0<\lambda, \mu<1$ be such that
\[
\lambda<\frac{\mu}{15k+3\lambda}<\frac{1}{6}.
\]
Then $Q_{m,t}(\lambda,\mu,k)$ is exponentially generic in the set of all presentations $\langle a_1,\dots, a_m| r_1,\dots, r_t\rangle$ where $r_i$ are cyclically reduced words in $F(A)$.
\end{prop}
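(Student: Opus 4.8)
The plan is to establish exponential genericity by verifying each of the three clauses of the $(\lambda,\mu,k)$-condition on its own and then intersecting. Since a finite intersection of exponentially generic sets is exponentially generic (the complement of the intersection is contained in the union of the complements, and a finite sum of exponentially decaying quantities decays exponentially), it suffices to show that the set of tuples $(r_1,\dots,r_t)$ violating any single clause has exponentially small relative measure in the set of all $t$-tuples of cyclically reduced words with $|r_i|\le n$ (equivalently $|r_i|=n$, since by the remark in the introduction the two models are comparable up to multiplicative constants). I would work one coordinate at a time: because the words $r_1,\dots,r_t$ are chosen independently, it is enough to bound the failure probability for a single random cyclically reduced word $r$ of length $n$ and then absorb the factor $t$ (a fixed constant) into the exponential estimate.

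First I would dispose of clause~(1). The number of cyclically reduced words of length $n$ that are proper powers is at most $\sum_{d\mid n,\,d<n}(2m-1)^{d}+O(1)$, which is dominated by $(2m-1)^{n/2}$ up to a polynomial factor, whereas the total count grows like $\mathrm{Const}\,(2m-1)^{n}$. Hence being a proper power is an exponentially negligible event, and this is the easy clause. Next, for clause~(2), the $C'(\lambda)$ condition, I would invoke the standard Arzhantseva--Ol'shanskii counting estimate from~\cite{AO96}: the probability that a random tuple fails $C'(\lambda)$ is exponentially small whenever $\lambda>0$ is fixed, since a piece of relative length $\ge\lambda$ forces a long common subword between two (cyclic permutations of) relators, and the number of words containing a prescribed long subword is exponentially smaller than the total. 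The hypothesis $\lambda<1/6$ is exactly what makes $C'(\lambda)$ a nontrivial small cancellation condition, and the strict inequality built into $\lambda<\frac{\mu}{15k+3\lambda}<\frac16$ guarantees we stay in the range where these estimates apply.

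The substantive clause is~(3), the genericity of non-$(\mu,k)$-readability, and this is where I expect the main obstacle to lie; this is precisely Arzhantseva's theorem from~\cite{A1}. The strategy is to count, for a fixed finite connected folded $A$-graph $\Gamma$ with at most $\mu|w|$ edges, with $b(\Gamma)\le k$, and with a vertex of degree $<2m$, how many long words $w$ can be read as labels of paths in $\Gamma$. A folded $A$-graph with $E$ edges carries at most $(2m-1)^{E}$ reduced paths of a given combinatorial type up to the choice of basepoint and direction, so the number of words of length $\ell$ readable in some such graph with $E\le\mu\ell$ edges is at most polynomial-in-$\ell$ times $(2m-1)^{\mu\ell}\cdot(\text{number of graph shapes})$, and the number of shapes with bounded first Betti number and a vertex of deficient degree is itself controlled. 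The crucial point is the precise bookkeeping that converts the constraint $b(\Gamma)\le k$ together with the degree-deficiency into the inequality $\lambda<\frac{\mu}{15k+3\lambda}$: this is the numerical threshold at which the readable words, counted as subwords of cyclic permutations of relators of length $\ge|r_i|/2$, become exponentially sparse relative to all choices of the relators. I would therefore cite the explicit estimate of~\cite{A1} (as generalized from~\cite{AO96}) to conclude that, under the stated relation among $\lambda,\mu,k$, clause~(3) holds outside an exponentially negligible set. Combining the three exponential estimates and summing the (finitely many, $t$) coordinatewise contributions gives that $Q_{m,t}(\lambda,\mu,k)$ is exponentially generic, as claimed.
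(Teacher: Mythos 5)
Your proposal matches the paper's treatment: the paper gives no proof of this proposition at all, stating only that it is implied by the results of Arzhantseva--Ol'shanskii~\cite{AO96} and Arzhantseva~\cite{A1}, which is exactly where you locate the substantive content (clauses~(2) and~(3)). The extra scaffolding you supply --- intersecting finitely many exponentially generic sets, the coordinatewise reduction, and the proper-power count for clause~(1) --- is routine and correct, so your argument is essentially the same as the paper's, just with the standard reductions spelled out.
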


\section{Properties of subgroups of generic groups}

\begin{prop}\label{p:red}
Let $m\ge 2, t\ge 1, k\ge m$ be integers and let $0<\lambda, \mu<1$ be real numbers satisfying
\[
\lambda<\frac{\mu}{15k+3\lambda}<\frac{1}{6} \quad \text{ and } \quad
0<\frac{\lambda}{1-3\lambda}< \frac{\mu}{15k+5}.
\]
Let \[ G=\langle a_1,\dots, a_m| r_1,\dots, r_t\rangle \tag{\dag} \] be a presentation satisfying the $Q_{m,t}(\lambda,\mu,k)$ condition.
Let $H\le G$ be a $k$-generated subgroup with $[G:H]=\infty$. Let $(\Gamma,x_0)$ be an $A$-graph with $b(\Gamma)\le k$ representing $H\le G$ with the smallest number of edges among all such graphs.

Then $\Gamma$ is folded and the label of every reduced edge-path in $\Gamma$ is $\lambda$-reduced with respect to presentation $(\dag)$.
\end{prop}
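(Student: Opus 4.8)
The plan is to run the whole argument off the minimality of $\Gamma$, converting every possible defect into either an edge-reducing move (contradicting minimality) or a $(\mu,k)$-readable relator subword (contradicting clause (3) of the $Q_{m,t}(\lambda,\mu,k)$-condition). Foldedness is the easy half: if $\Gamma$ were non-folded I would apply a single Stallings fold, which identifies two edges into one and, by Proposition~\ref{p:AO-move}(1), produces a graph representing the same $H$ with first Betti number still at most $k$ but with strictly fewer edges, contradicting minimality. Hence $\Gamma$ is folded, and by the remark following the definition of folded the label of every reduced edge-path is automatically a freely reduced word; it remains only to exclude long relator subwords.

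For the $\lambda$-reduced assertion I would argue by contradiction, assuming some reduced path has a subword $v=\theta(p')$ which is a subword of a cyclic permutation of some $r_i^{\pm1}$ with $|v|>(1-3\lambda)|r_i|\ge |r_i|/2$. First I would record two consequences of minimality: $\Gamma$ has no valence-one vertex except possibly $x_0$ (a pendant edge carries nothing into $\pi_1$ and could be deleted), so together with $b(\Gamma)\le k$ the number of branch vertices, and hence the number of maximal arcs, is $O(k)$. Then I would pass to the trace subgraph $\Delta\subseteq\Gamma$ swept out by $p'$: it is connected, folded, satisfies $b(\Delta)\le b(\Gamma)\le k$, carries a path labelled $v$, and possesses degree-two (hence degree $<2m$, since $m\ge 2$) vertices in the interiors of its arcs, so the degree requirement for $(\mu,k)$-readability is met. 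The hypothesis $[G:H]=\infty$ enters precisely to guarantee such low-valence vertices persist, ensuring $\Gamma$ is not a complete $2m$-regular cover of the rose for which the statement fails.

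The heart is a dichotomy on $|E\Delta|$, the number of distinct edges $p'$ traverses. If $|E\Delta|\le\mu|v|$, then $\Delta$ witnesses that $v$ is $(\mu,k)$-readable, contradicting clause (3). The small cancellation hypothesis makes the incompressible regime tractable: since $v$ is a single contiguous subword of $r_i$, any maximal arc that $p'$ traverses twice has its label occur twice in $r_i^{\pm1}$, hence is a piece and has length $<\lambda|r_i|$ by $C'(\lambda)$; thus all incompressible length sits on once-traversed arcs, of which there are $O(k)$, and I would use Corollary~\ref{c:q} (relator subwords are $(C,0)$-quasigeodesic) to control how $p'$ folds back on itself. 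If instead some single maximal arc of $\Gamma$ carries a relator subword of length $>|r_i|/2$, then writing the cyclic relator as $ww'$ with $|w'|<|w|$ and replacing $w$ by $(w')^{-1}$ is an AO-move (Proposition~\ref{p:AO-move}(2)) preserving $H$ and $b\le k$ while strictly dropping the edge count, contradicting minimality. Balancing $(\text{number of arcs})\times(\text{arc length})$ against $\mu|v|\ge\mu(1-3\lambda)|r_i|$ is exactly what the numerical hypotheses $\lambda<\frac{\mu}{15k+3\lambda}<\frac16$ and $\frac{\lambda}{1-3\lambda}<\frac{\mu}{15k+5}$ are calibrated to make succeed.

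The hard part, which I expect to be the main obstacle, is the intermediate configuration where $|E\Delta|>\mu|v|$ yet the long relator subword $v$ straddles an interior branch vertex $u$, so that it lies on no single arc long enough for a direct AO-move and a crude length-average over the $O(k)$ arcs does not manufacture one. Here the relation must be used globally: the $\pi_1(\Gamma)$-loops through $u$ have a product whose label contains $v$, and since $v=_G z$ for the short complementary word $z$ with $|z|<3\lambda|r_i|$, this lets me replace a generator of $H$ by a strictly shorter representative; after the ensuing folds the edge count drops, again contradicting minimality. Making this reduction precise — identifying the reroutable configuration forced by a split relator subword, handling the case where $p'$ is nearly embedded, and verifying the net edge count strictly decreases — is where the $C'(\lambda)$ geometry of Proposition~\ref{p:e}, the arc bound, and the quasigeodesic control of Corollary~\ref{c:q} all have to be combined, and it is the step requiring the most care.
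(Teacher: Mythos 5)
Your foldedness argument, your bound on the number of maximal arcs, and your two extreme regimes (small spanned subgraph gives a $(\mu,k)$-readability witness; a long arc admits an edge-reducing AO-move) all match the paper's proof in outline. But your dichotomy is not exhaustive, and the ``intermediate configuration'' you flag at the end is a genuine gap, not a technical loose end: nothing in your setup prevents the spanned subgraph $\Delta$ from having more than $\mu|v|$ edges while no single maximal arc carries a relator subword of length more than $|r_i|/2$, and your proposed repair --- replacing a generator of $H$ by a shorter representative and asserting that ``after the ensuing folds the edge count drops'' --- is unsubstantiated: modifying a generating set of $H$ has no controlled effect on the edge count of the minimal graph $\Gamma$, which is fixed at the outset.

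The missing idea is a different calibration of the two cases, which makes your intermediate case empty. Decompose the long path $p$ (label $v$, with $|v|>(1-3\lambda)|r|$) along the maximal arcs of $\Gamma$ as $p=p_1\cdots p_s$, and split on whether some $p_i$ has length $>5\lambda|r|$, not $>|r|/2$. If yes, the $C'(\lambda)$ condition together with $r$ not being a proper power shows that the overlaps of $p_i$ with the remaining $p_j$ are labeled by pieces, hence have length $<\lambda|r|$ each, so $p_i$ contains a subarc $p_i'$ of length $>3\lambda|r|$ disjoint from the rest of $p$; now perform the AO-move of Proposition~\ref{p:AO-move}(2) on $p_i'$ using as replacement word the complement $z$ of the \emph{whole} word $v$ in the relator, i.e. the reduced word $z$ with $v=_G z$ and $|z|<3\lambda|r|$, which exists precisely because $|v|>(1-3\lambda)|r|$. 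This deletes more than $3\lambda|r|$ edges and adds fewer than $3\lambda|r|$, contradicting minimality. If no, then every $p_i$ has length $\le 5\lambda|r|$, and since $\Gamma$ has at most $3k+1$ maximal arcs, the subgraph spanned by $p$ has at most $(3k+1)\cdot 5\lambda|r|\le\mu(1-3\lambda)|r|\le\mu|v|$ edges --- this is exactly what the hypothesis $\frac{\lambda}{1-3\lambda}<\frac{\mu}{15k+5}$ is calibrated for --- so it is a $(\mu,k)$-readability witness for $v$, contradicting condition (3) of the $Q_{m,t}(\lambda,\mu,k)$-condition. The point you missed is that the AO-move should reroute the \emph{entire} path $p$ (whose relator-complement is short because $v$ is long) while deleting only the edges of the non-overlapping subarc $p_i'$; with that move, the threshold for ``long arc'' drops from $|r|/2$ to $5\lambda|r|$, and then ``all arcs short'' automatically forces the spanned subgraph to be small. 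The troublesome regime you anticipated never occurs.
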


\begin{proof}
We may assume that $H\ne 1$ since otherwise the result holds vacuously. By the minimal choice of $\Gamma$, the $A$-graph $\Gamma$ is folded and has no degree-1 vertices except possibly $x_0$. Since $H\le G$ has infinite index, there exists a vertex of degree $<2m$ in $\Gamma$.

Suppose that the conclusion of the proposition fails for $\Gamma$. Then there exists a reduced edge-path $p$ in $\Gamma$ with label $v$ such that $v$ is a subword of a cyclic permutation $r$ of some defining relator $r_j^{\pm 1}$ with $|v|>(1-3\lambda)|r|$. The maximal subarcs of $\Gamma$ break $p$ into a concatenation $p=p_1\dots p_s$, where $p_2,\dots, p_{s-1}$ are maximal arcs in $\Gamma$ and where $p_1,p_s$ are contained in maximal arcs.

Recall that by definition every endpoint of a maximal arc in $\Gamma$ either has degree $\ge 3$ or is the base-vertex $x_0$. Thus $x_0$ is not an interior vertex of  any of $p_1,\dots, p_s$. 

There are two cases to consider.

{\bf Case 1.} Suppose there exists some $p_i$ with $|p_i|>5\lambda|r|$. 

We claim that there exists a subpath $p_i'$ of $p_i$ with $|p'|>3\lambda|r|$ such that $p_i'$ does not overlap the rest of the path $p$.

Assume first that $1<i<s$. Since $|p_i|>5\lambda|r|$, the word $r$ is not a proper power and since $(\dag)$ satisfies the $C'(\lambda)$ small cancellation condition, we have $p_i\ne p_j^{\pm 1}$ for $1<j<s, j\ne i$ and hence $p_i$ does not overlap any such $p_j$. Similarly, the overlap of $p_i$ with $p_1, p_s$ have length $< \lambda|r|$ each. Therefore $p_i$ has a subpath of length $>3\lambda|r|$ that does not overlap with the rest of $p$, as claimed.
Assume now that $i=1$ (the case $i=s$ is similar).  Thus $|p_1|>5\lambda|r|$. Then $p_1$ does not overlap any $p_j$ with $1<j<s$ since otherwise we are in the previous situation. The overlap of $p_1$ and $p_s$ has length $<\lambda|r|$ since $(\dag)$ satisfies $C'(\lambda)$  and since $r$ is not a proper power.
Thus the claim is verified.

Let $\Gamma'$ be obtained from $\Gamma$ by performing the AO-move on the arc $p_i'$ corresponding to the relator $r$. Since $|p'|>3\lambda|r|$, the graph $\Gamma$ has fewer edges than $\Gamma$. Moreover, since $x_0$ was not an interior vertex of $p_i'$, we have $x_0\in V\Gamma'$ and by Proposition~\ref{p:AO-move} the graph $\Gamma'$ represents $H\le G$. Moreover, $b(\Gamma')\le b(\Gamma)\le k$. This contradicts the minimal choice of $\Gamma$.

Thus Case~1 is impossible.

 {\bf Case 2.} Suppose that $|p_i|\le 5\lambda|r|$ for $i=1,\dots, s$. 
 
 Let $\Gamma''$ be the subgraph of $\Gamma$ spanned by $p=p_1\dots p_s$. Since $b(\Gamma)\le k$ and $\Gamma$ has at most one vertex of degree $1$ (namely $x_0$), there are at most $3k+1$ maximal arcs in $\Gamma$. Hence the number of edges $\Gamma''$ is at most
 \[
 (3k+1)5\lambda|r|\le \mu (1-3\lambda)|r|\le \mu|v|,
\]  
where the inequality $(3k+1)5\lambda\le  1-3\lambda$ holds by our choice of $\lambda, \mu, k$. 
Since $\Gamma$ has a vertex of degree $<2m$ and all vertices in $\Gamma$ have degree $\le 2m$, it follows that $\Gamma''$ also has a vertex of degree $<2m$. Finally, since $\Gamma''$ is a connected subgraph of $\Gamma$ and $b(\Gamma)\le k$, it follows that $b(\Gamma'')\le k$. Hence the word $v$ is $(\mu,k)$-readable. Since $v$ is a subword of $r$ with $|v|\ge |r|/2$, we get a contradiction with the assumption that $(\dag)$ satisfies the $Q_{m,t}(\lambda,\mu,k)$-condition.

Therefore Case~2 is also impossible.

Hence the label of every reduced path in $\Gamma$ is $\lambda$-reduced, as required.

\end{proof}

Proposition~\ref{p:red} and Corollary~\ref{c:q} imply:

\begin{cor}\label{c:red}
Let $m,t,k,\lambda, \mu, G, H$ be as in Proposition~\ref{p:red}. Then the following hold:

\begin{enumerate}
\item The labeling homomorphism $\phi: \pi_1(\Gamma, x_0)\to G$ is injective. In particular, the group $H=\phi(\pi_1(\Gamma, x_0) )$ is free.
\item The label of every reduced edge-path in $\Gamma$ is a $(C,0)$-quasigeodesic in $G$ with $C=\frac{1-3\lambda}{\lambda}$. 
\item The subgroup $H\le G$ is quasiconvex in $G$. 
\end{enumerate}
\end{cor}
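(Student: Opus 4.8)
The plan is to derive all three statements from the conclusion of Proposition~\ref{p:red}, namely that $\Gamma$ is folded and that the label of every reduced edge-path in $\Gamma$ is $\lambda$-reduced; parts (1) and (2) are then essentially immediate, while the work is concentrated in part (3). For part (1), observe that $\pi_1(\Gamma,x_0)$ is free, being the fundamental group of a graph, so it suffices to show that the labeling homomorphism $\phi$ is injective. A nontrivial element of $\pi_1(\Gamma,x_0)$ is represented by a nontrivial reduced closed edge-path $\gamma$ at $x_0$; since $\Gamma$ is folded its label $w=\theta(\gamma)$ is a nontrivial freely reduced word, and by Proposition~\ref{p:red} it is $\lambda$-reduced. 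Proposition~\ref{p:green} then forces $w$ to represent a nontrivial element of $G$, so $\phi([\gamma])=\overline w\neq 1$ and $\phi$ is injective, whence $H\cong\pi_1(\Gamma,x_0)$ is free. Part (2) is then just the combination of Proposition~\ref{p:red}, which makes the label of every reduced edge-path $\lambda$-reduced, with Corollary~\ref{c:q}, which makes every $\lambda$-reduced word a $(C,0)$-quasigeodesic for $C=\frac{1-3\lambda}{\lambda}$.

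For part (3) I would proceed as follows. Since $G$ satisfies $C'(\lambda)$ with $\lambda\le 1/6$ it is word-hyperbolic; fix a hyperbolicity constant $\delta$ for the Cayley graph of $G$ with respect to $A$ and let $D=D(C,\delta)$ be the constant from the stability of quasigeodesics (the Morse lemma) bounding the Hausdorff distance between a geodesic and a $(C,0)$-quasigeodesic with the same endpoints. Let $R=\operatorname{diam}(\Gamma)<\infty$ be the diameter of the finite connected graph $\Gamma$. By left-translation invariance it is enough to bound geodesics from $1$ to an arbitrary $h\in H$. Write $h=\overline{\theta(\gamma)}$ for a reduced closed edge-path $\gamma$ at $x_0$; by part (2) the word $w=\theta(\gamma)$ is then a $(C,0)$-quasigeodesic from $1$ to $h$. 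The key point is that every vertex $\overline{w_j}$ visited by this path lies within distance $R$ of $H$: if $w_j$ reads $\gamma$ up to an intermediate vertex $v_j\in V\Gamma$, choose an edge-path $p$ in $\Gamma$ from $v_j$ to $x_0$ with $|p|\le R$, so that $\gamma_{[0,j]}p$ is a closed path at $x_0$ and hence $\overline{w_j}\,\overline{\theta(p)}\in H$ with $|\theta(p)|\le R$, giving $d_A(\overline{w_j},H)\le R$. The Morse lemma now places every geodesic from $1$ to $h$ in the $D$-neighborhood of $w$, and therefore in a bounded neighborhood of $H$ (of radius $D+R+1$, say); as $h\in H$ was arbitrary, $H$ is quasiconvex.

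The only genuine obstacle is the quasiconvexity of part (3), and there the essential ingredient is the stability of quasigeodesics in the hyperbolic group $G$: this is exactly what converts the strong but purely combinatorial statement of part (2)---that reduced labels are $(C,0)$-quasigeodesics---into a statement about honest geodesics. The remaining input, that every intermediate vertex of $\Gamma$ maps to within bounded distance of $H$, is controlled entirely by the finiteness, hence bounded diameter, of $\Gamma$.
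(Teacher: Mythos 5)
Your proof is correct and follows essentially the same route the paper intends: the paper states the corollary as an immediate consequence of Proposition~\ref{p:red} and Corollary~\ref{c:q}, and your write-up fills in exactly the standard details --- Greendlinger (Proposition~\ref{p:green}) for injectivity in part (1), the direct combination of Proposition~\ref{p:red} with Corollary~\ref{c:q} for part (2), and stability of quasigeodesics in the word-hyperbolic group $G$ together with the bounded diameter of $\Gamma$ for part (3).
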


\section{Proofs of the main results}

\begin{thm}\label{t:ACC}
Let $k\ge m\ge 2, t\ge 1$ integers and let $0<\lambda,\mu<1$ be real numbers such that
\[
\lambda<\frac{\mu}{15k+3\lambda}<\frac{1}{6}
\quad
\text{ and }
\quad
0<\frac{\lambda}{1-3\lambda}< \frac{\mu}{15k+5}.
\]
Let \[ G=\langle a_1,\dots, a_m| r_1,\dots, r_t\rangle \tag{\dag} \] be a presentation satisfying the $Q_{m,t}(\lambda,\mu,k)$ condition.
Then $ACC_k$ holds for $G$.
\end{thm}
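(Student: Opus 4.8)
The plan is to run a Stallings-graph monovariant argument in the spirit of Takahasi's theorem for free groups, using Proposition~\ref{p:red} and Corollary~\ref{c:red} to import the free-group combinatorics into $G$. Suppose $H_1\lneq H_2\lneq\cdots$ is a strictly ascending chain of $k$-generated subgroups of $G$; I must derive a contradiction. First I would dispose of the finite-index case: if some $H_N$ has finite index in $G$, then every later $H_i$ lies between $H_N$ and $G$, and since $[G:H_N]<\infty$ there are only finitely many such intermediate subgroups (each being a union of cosets of $H_N$), so the chain cannot be infinite and strictly ascending. Hence I may assume every $H_i$ has infinite index. By Proposition~\ref{p:red} each $H_i$ is then represented by a minimal folded $A$-graph $(\Gamma_i,x_{0,i})$ with all reduced-path labels $\lambda$-reduced and $b(\Gamma_i)\le k$, and by Corollary~\ref{c:red} the labeling map $\pi_1(\Gamma_i,x_{0,i})\to G$ is injective, so $H_i$ is free with $\operatorname{rank}(H_i)=b(\Gamma_i)\le k$.

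Since the ranks $b(\Gamma_i)$ are bounded by $k$, some value $r$ occurs infinitely often; passing to the subsequence of indices with $b(\Gamma_i)=r$ loses no generality, because a strictly ascending chain stabilizes as soon as one of its infinite subsequences does. So I reduce to the case $b(\Gamma_i)=r$ for all $i$. Now I would import the key free-group fact: if $H\le K$ are finitely generated subgroups of a free group of equal rank $r$, then the canonical label-preserving immersion $\Gamma_H\to\Gamma_K$ of Stallings graphs is surjective (its image is connected, contains the basepoint, and carries a rank-$r$ subgroup, so it has full first Betti number inside the rank-$r$ core graph $\Gamma_K$ and a proper such subgraph would need pendant trees that a core graph lacks), whence $|E\Gamma_K|\le|E\Gamma_H|$, with equality forcing an isomorphism and thus $H=K$. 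Consequently, along an equal-rank chain the edge-count $|E\Gamma_i|$ is non-increasing and drops strictly at each strict inclusion; being a non-negative integer it cannot drop forever, and the chain stabilizes. My goal is to reproduce this monovariant for the graphs $\Gamma_i$ representing the $H_i\le G$.

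The hard part is that $G$ is not free, so the inclusion $H_i\le H_{i+1}$ does not directly furnish a label-preserving morphism $\Gamma_i\to\Gamma_{i+1}$: two $\lambda$-reduced words that are equal in $G$ need not be equal as words, differing instead through equality diagrams as described in Proposition~\ref{p:e}. This is exactly where the small-cancellation input is needed. To build the comparison map I would take a finite generating set of $H_{i+1}$, read each generator as a $\lambda$-reduced loop, form the $A$-graph obtained by attaching these loops to $\Gamma_i$, and then fold and apply $AO$-moves; by Proposition~\ref{p:AO-move} this yields a minimal graph representing $H_{i+1}$, i.e.\ a copy of $\Gamma_{i+1}$, while never increasing the first Betti number. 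The equality-diagram structure of Proposition~\ref{p:e}, together with the uniform $(C,0)$-quasigeodesic control of Corollary~\ref{c:red} (with $C=\frac{1-3\lambda}{\lambda}$ the same constant for every $H_i$), is what guarantees that these folds and $AO$-moves match the two graphs up to controlled small-cancellation corrections rather than arbitrary relations.

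The core of the argument, and its main obstacle, is to show that in the constant-rank regime this reduction process behaves like the surjective immersion of the free case: a strict inclusion $H_i\lneq H_{i+1}$ must force a strict decrease of a suitable integer complexity (the number of edges, or edges weighted by quasigeodesic length), so that the complexity cannot strictly decrease indefinitely. Establishing this strict monotonicity in $G$, using Propositions~\ref{p:e} and~\ref{p:AO-move} to transfer the free-group surjectivity argument through the equality diagrams, is where essentially all the work lies; granting it, the chain stabilizes and $ACC_k$ follows.
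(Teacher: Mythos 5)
Your overall setup (minimal graphs via Proposition~\ref{p:red}, injectivity and quasigeodesity via Corollary~\ref{c:red}, disposal of the finite-index case) is fine, but the engine of your argument is broken: the free-group fact you want to import is false. If $H\le K$ are finitely generated subgroups of a free group of equal rank $r$, the Stallings immersion $\Gamma_H\to\Gamma_K$ need \emph{not} be surjective, and the edge count can strictly \emph{increase} along a strict equal-rank inclusion. The flaw is in your parenthetical justification: the image $\Delta$ of $\Gamma_H$ does carry a subgroup containing $H$, but in a free group containing a rank-$r$ subgroup does not force rank $\ge r$ (a free group of rank $2$ contains free subgroups of every finite rank), so you cannot conclude that $b(\Delta)$ equals $r$. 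Concretely, in $F=F(a,b,c)$ let $H=\ker\bigl(F(a,b)\to\mathbb{Z}/2\mathbb{Z}\bigr)$ (both generators mapping to $1$), which is free of rank $3$ by Nielsen--Schreier and whose Stallings graph is the double cover of the $\{a,b\}$-rose, with $4$ edges and no $c$-edges; and let $K=\langle a,b,cwc^{-1}\rangle$ for a long cyclically reduced $w\in F(a,b)$, which is free of rank $3$ with Stallings graph having $|w|+3$ edges. Then $H\lneq K$, both have rank $3$, the immersion $\Gamma_H\to\Gamma_K$ misses every $c$-edge and the whole $w$-loop, and $|E\Gamma_H|=4<|w|+3=|E\Gamma_K|$. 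So your monovariant fails already in free groups; the Higman--Takahasi/Kapovich--Myasnikov argument does not run on ``equal rank implies surjective'' but on the finiteness of the set of folded quotients of a fixed finite graph, i.e.\ of the possible images of $\Gamma_{H_1}$ in the later graphs.

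Second, even if a corrected monovariant existed, your proposal defers precisely the part that constitutes the theorem: you state that transferring the argument through equality diagrams and AO-moves ``is where essentially all the work lies,'' so what you have is a plan rather than a proof, and its first step is already unsound. For comparison, the paper's proof of Theorem~\ref{t:ACC} uses neither a constant-rank reduction nor edge-count monotonicity. It fixes a free basis of $H_1$ read off $\Gamma_1$, uses the $(C,0)$-quasigeodesic property of Corollary~\ref{c:red} to realize those basis elements as loops in each later $\Gamma_i$ spanning a subgraph $\Delta_{i,1}$ of volume at most $2kC\,vol(\Gamma_1)$, and pigeonholes to make $(\Delta_{i,1},\ast_i)$ constant along a subsequence; it then adjoins one new loop $\gamma_2$ of $H_2$ not carried by $\Delta_1$, reads it in later $\Gamma_i$ with the same bounded-length control, and uses injectivity of the labeling homomorphisms to get $b(\Delta_{i,2})\ge b(\Delta_1)+1$. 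Iterating $k+1$ times produces a connected subgraph of some $\Gamma_i$ with first Betti number at least $k+1$, contradicting $b(\Gamma_i)\le k$. That bounded-volume-plus-pigeonhole mechanism, not a monotone complexity, is what makes the small-cancellation input usable.
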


\begin{proof}

Let $G=\langle a_1,\dots, a_m| r_1,\dots, r_t\rangle$ be a group presentation satisfying the $(\lambda, \mu, k)$ condition. 
Put $C=\frac{1-3\lambda}{\lambda}$.

We claim that $G$ satisfies $ACC_k$.  Indeed, suppose not.

Then there exists an infinite strictly ascending sequence 

\[
H_1\lneq H_2\lneq H_3 \lneq \dots \tag{\ddag}
\]
of nontrivial $k$-generated subgroups of $G$.  Thus every $H_i$ has infinite index in $G$ since otherwise a sequence as above cannot be infinite.

For every $i=1, 2, 3,\dots$ let $(\Gamma_i, \ast_i)$ be a finite connected folded $A$-graph with $b(\Gamma_i)$ representing $H_i$ such that $(\Gamma_i, \ast_i)$ has the smallest number of (topological) edges among all such $A$-graphs. Thus $\Gamma_i$ is folded and  has no degree-1 vertices except possibly $\ast_i$. Moreover, by Proposition~\ref{p:red} and Corollary~\ref{c:red}, the label of every reduced edge-path in $\Gamma_i$ is $\lambda$-reduced and $(C,0)$-quasigeodesic in $G$. 
Recall that according to our convention for $A$-graphs, oriented edges in $\Gamma_i$ labelled by elements of $A$ are considered positive, and oriented edges in $\Gamma_i$ labelled by elements of $A^{-1}$ are considered negative. Recall also that $vol(\Gamma_i)=\# E_+\Gamma_i$, the number of topological edges of $\Gamma_i$. Since $(\ddag)$ is an infinite sequence of distinct $k$-generated subgroups of $G$, there are infinitely many distinct basepointed $A$-graphs in the sequence $(\Gamma_i, \ast_i)_{i=1}^\infty$. After passing to a subsequence, we will assume that $vol(\Gamma_i)<vol(\Gamma_{i+1})$ for all $i\ge 1$. 

For the graph $(\Gamma_1, \ast_1)$ choose a maximal tree $T\subseteq \Gamma$. Each positive edge in $\Gamma_i-T$ defines an element of $\pi_1(\Gamma_i,\ast_i)$ and the set of all such elements gives a free basis $B_T$ of $\pi_1(\Gamma_i,\ast_i)$. Denote by $S_T$ the set of labels of the paths in $B_T$. Thus $S_T$ is a free basis of $H_1$. By construction, $\#B_T=\#S_T=b(\Gamma_1)\le k$ and for every $w\in S_T$ we have $|w|\le 2 vol(\Gamma_1)$. 

For every $i=2, 3, 4, \dots$ and for every $w\in S_T$ choose a freely reduced word $\hat w(i)\in F(A)$ such that $\hat w(i)=G w$ and that $\hat w_i$ labels a closed reduced path $p(w,i)$ from $\ast_i$ to $\ast_i$ in $\Gamma_i$. Such $\hat w_i$ exists since $H_1\le H_i$. Since $\hat w(i)$ is a $(C,0)$-quasigeodesic in $G$, we have $|\hat w_i|=|p(w,i)|\le C|w|\le 2C\, vol(\Gamma_1)$.  For all $i\ge 2$, let $\Delta_{i,1}$ be the subgraph of $\Gamma_i$ spanned by the union of $p(w,i)$ over all $w\in S_T$. Thus $\Delta_{i,1}$ is a finite connected graph containing $\ast_i$ with $vol(\Delta_{i,1})\le 2kC\, vol(\Gamma_1)$. Thus there are only finitely many possibilities for the base-pointed $A$-graph $(\Delta_{i,1},\ast_i)$. After passing to a further subsequence, we will assume that there is a finite connected base-pointed $A$-graph $(\Delta_1,x_1)$ such that for all $i\ge 2$ we have $(\Delta_{i,1},\ast_i)=(\Delta_1,x_1)$. Note that, since $H_1\ne 1$, the graph $\Delta_1$ is non-contractible, so that $b(\Delta_1)\ge 1$. 

Since all the subgroups $H_1, H_2,\dots, $ are distinct, for infinitely many $i\ge 2$ we have $\Delta_{i,1}\subsetneq \Gamma_i$. After passing to a further subsequence, we may assume that $\Delta_{i,1}\subsetneq \Gamma_i$ for all $i\ge 2$. 

Since $\Delta_{2,1}\subsetneq \Gamma_2$, we can choose a finite connected subgraph $\Gamma_2'\subseteq \Gamma_2$ such that $\Delta_{2,1}\subseteq \Gamma_2'$, that $b(\Gamma_2')=b(\Delta_{2,1})+1=b(\Delta_2)+1$ and that $\Gamma_2'$ has no degree-1 vertices except possibly for $\ast_2$. Thus $\Gamma_2'$ is spanned by the union of $\Delta_{2,1}$ and a single closed reduced path $\gamma_2$ at $\ast_2$, not contained in $\Delta_{2,1}$. We have $\pi_1(\Gamma_2',\ast_2)=\pi_1(\Delta_{2,1},\ast_2)\ast \langle \gamma_2\rangle$.  Let $u$ be the label of $\gamma_2$.

Now for each $i=3, 4, 5, \dots$ choose a reduced word $u_i$ labeling a closed path $p(\gamma_2,i)$ in $\Gamma_i$ from $\ast_i$ to $\ast_i$ such that $u_i=_G u$.  Then $|p(\gamma_2,i)|=|u_i|\le C |u|=C|\gamma_2|$. For $i=3, 4, 5, \dots $ let $\Delta{i,2}$ be the subgraph of $\Gamma_i$ given by the union of $\Delta_{i,1}$ and the path $p(\gamma_2,i)$. Since all the labeling homomorphisms for the graphs under consideration are injective and $\gamma_2\not\in \pi_1(\Delta_{2,1})=\pi_1(\Delta_1)$, and since $\Delta_{i,1}=\Delta_1$ for all $i\ge 3$, we have $p(\gamma_2,i)\not\subseteq \Delta_{i,1}$ and thus $\Delta_{i,1}\subsetneq \Delta_{i,2}$ for all $i\ge 3$. Therefore $b(\Delta_{i,2})\ge b(\Delta_1)+1$ for all $i\ge 3$. Moreover, by construction $vol(\Delta_{i,2})\le vol(\Delta_1)+C|\gamma_2|$.  Thus there are only finitely many choices for the base-pointed $A$-graph $(\Delta_{i,2},\ast_i)$, where $i\ge 3$. After passing to a further subsequence, we may assume that there is a finite connected base-pointed $A$-graph $(\Delta_2,\ast)$ such that for all $i\ge 3$ we have $(\Delta_{i,2},\ast_2)=(\Delta_2,\ast)$. 

Iterating this process, after passing to a subsequence of the original sequence, for $j=1, 2, 3,, \dots$ we construct an infinite sequence of finite connected base-pointed $A$-graphs $(\Delta_j, x_j)$ and subgraphs $\Delta_{i,j} \subseteq \Gamma_{i}$, where $i\ge j+1$, with the following properties:

\begin{enumerate}
\item Each $\Delta_{i,j}$ contains $\ast_i$ and satisfies $(\Delta_j, x_j)=(\Delta_{i,j} ,\ast_i)$ where $j\ge 1$ and $i\ge j+1$.
\item We have $b(\Delta_{j+1})\ge b(\Delta_j)+1$ for $j=1,2,3,\dots$.
\item We have $(\Delta_j, x_j)\subsetneq (\Delta_{j+1}, x_{j+1})$ for all $j\ge 1$.
\item We have $b(\Delta_1)\ge 1$. 
\end{enumerate}

Then $b(\Delta_{k+1})\ge k+1$. Since $(\Delta_{k+2,k+1},\ast_{k+2})=(\Delta_{k+1}, x_{k+1})$ is a connected subgraph of $\Gamma_{k+2}$, it follows that $b(\Gamma_{k+2})\ge k+2$, yielding a contradiction.
\end{proof}

\begin{proof}[Proof of Theorem~\ref{t:A}]

We may assume that $k\ge m$ since $ACC_k$ implies $ACC_s$ for every integer $1\le s \le k$.

Choose $0<\lambda<\mu<1/6$ so that $\lambda,\mu$ satisfy the assumptions of Theorem~\ref{t:ACC}. Put $Q_{m,t,k}=Q_{m.t}(\lambda,\mu,k)$. We claim that  $Q_{m,t,k}$ satisfies the conclusions of Theorem~\ref{t:A}.

First, by Proposition~\ref{p:generic} the set $Q_{m.t}(\lambda,\mu,k)$ is exponentially generic in the set of all $m$-generator $t$-relator presentations $\langle a_1,\dots, a_k| r_1,\dots, r_t\rangle$ with $r_i$ being cyclically reduced in $F(A)$.  Theorem~\ref{t:ACC} shows that $ACC_k$ holds for every group $G$, given by a $Q_{m.t}(\lambda,\mu,k)$  presentation. Therefore $Q_{m,t,k}=Q_{m.t}(\lambda,\mu,k)$ satisfies the conclusions of Theorem~\ref{t:A}, as required.

\end{proof}

\begin{proof}[Proof of Theorem~\ref{t:B}]

As before, it is enough to prove Theorem~\ref{t:B} for all $k\ge m$, and thus we assume that $k\ge m$.

Choose $0<\lambda,\mu<1/6$ as in Theorem~\ref{t:ACC}.

It is known~\cite{Oll07} that for any $0<\lambda\le 1/6$ the the $C'(\lambda)$ small cancellation condition is satisfied with overwhelming probability for $d$-random groups for sufficiently small density $d>0$. Also, by Proposition~\ref{p:red}, for any $k\ge m\ge 2$ condition $Q_{m,1}(\lambda,\mu,k)$ for cyclically reduced words in $F_m=F(a_1,\dots, a_m)$ is exponentially generic, for $\mu,\lambda$ as in Proposition~\ref{p:red}. Therefore, by Corollary~4.3 of \cite{KS08}, for any $k\ge m\ge 2$ and $\lambda\mu$ as in Proposition~\ref{p:red}, there exists $0<d_0=d_0(m,k)<1$ such that for any $0<d<d_0$ condition $Q_{m}(\lambda,\mu,k)$ holds with overwhelming probability for random finite presentations $\langle a_1,\dots, a_m|R\rangle$ at density $d$.  By Theorem~\ref{t:ACC}, $ACC_k$ holds for every group given by a $Q_{m}(\lambda,\mu,k)$ presentation. The conclusion of Theorem~\ref{t:B} now follows.
\end{proof}

\footnotesize

\end{document}